\DeclareMathOperator{\End}{End} 
 \DeclareMathOperator{\Ker}{Ker}
 \DeclareMathOperator{\Ann}{Ann}
\DeclareMathOperator{\Hom}{Hom}
\DeclareMathOperator{\rad}{rad}
\DeclareMathOperator{\Ext}{Ext}
\DeclareMathOperator{\Img}{Im}
\numberwithin{equation}{section}
\theoremstyle{plain}
\newtheorem{theorem}{Theorem}[section]
\newtheorem{lemma}[theorem]{Lemma}
\newtheorem{proposition}[theorem]{Proposition}
\newtheorem{corollary}[theorem]{Corollary}
\theoremstyle{definition}
\newtheorem{remark}[theorem]{Remark}
\begin{document}

\title[zad modules for associative algebras]
{Zero action determined modules for associative algebras}

\author{Wei Hu, Zhankui Xiao}

\address{Wei Hu, School of Mathematical Sciences, Beijing Normal University, Beijing 100875, P.R. China}
\email{huwei@bnu.edu.cn}

\address{Zhankui Xiao: School of Mathematical Sciences, Huaqiao University,
Quanzhou, Fujian 362021, P.R. China}

\email{zhkxiao@hqu.edu.cn}

\thanks{Both authors are partially supported by the National Natural Science Foundation of China
(Grant No. 11301195, 11471038). Xiao thanks a research foundation of Huaqiao University (Project 2014KJTD14)
and Hu is grateful to the Fundamental Research Funds for the Central Universities for partial support}

\subjclass[2010]{16D70, 15A86, 16P10}
\keywords{Zero action determined module; Zero product determined algebra;
Irreducible module; Principal projective module}

\begin{abstract}
Let $A$ be a unital associative algebra over a field $F$ and $V$ be a unital left $A$-module. The module
$V$ is called zero action determined if every bilinear map $f: A\times V\rightarrow F$ with the property that
$f(a,m)=0$ whenever $am=0$ is of the form $f(x,v)=\Phi(xv)$ for some linear map $\Phi: V\rightarrow F$.
In this paper, we classify the finite dimensional irreducible and principal projective zero action determined modules of $A$.
As an application, two classes of zero product determined algebras are shown:
some semiperfect algebras (infinite dimensional in general);
quasi-hereditary cellular algebras.
\end{abstract}

\maketitle

\section{Introduction}\label{xxsec1}

Let $F$ be a field and $A$ be a unital associative $F$-algebra. We say that an $A$-module $V$ is {\em zero action determined}
({\em zad} for short) if for every $F$-bilinear map $f: A\times V\rightarrow F$ with the property that
\begin{equation}\label{eq1.1}
f(a,m)=0\quad {\rm whenever}\quad am=0,
\end{equation}
then there exists a linear map $\Phi: V\rightarrow F$ such that
\begin{equation}\label{eq1.2}
f(x,v)=\Phi(xv)\quad {\rm for\ all}\quad x\in A, v\in V.
\end{equation}
To the best of our knowledge, this concept was first introduced in \cite{BGLLZ}, studied with respect to Lie algebras.

The notation of a zad module is an extension of the
zero product determined algebra (see Section \ref{xxsec3} for definition), which
was initially introduced in \cite{BGS}. The original motivation for studying zero product determined algebras
is to characterise the zero product preserving linear maps, see \cite{BrSe,CKL,CKLW} for some
historic background. After the seminal paper \cite{BGS}, the concept of zero product determined algebra
was extensively studied for associative and non-associative algebras (see \cite{BH,Grasic10,Grasic11,Grasic15,WangYC}
and the reference therein). At the same time, more applications of zero product determined algebras
were found \cite{BG,WangYC}, such as commutativity preserving linear maps, maps derivable at zero, etc.
On the other hand, the analogues of zero product determined algebras in functional analysis
have been attracting much more attention \cite{ABESV,ABEV}, where one additionally assumes that the maps in
question are continuous. We warmly encourage the reader to Bre\v{s}ar's paper \cite{Bresar} for
a comprehensive understanding of this topic.

Similar with \cite{BGLLZ}, we originally wanted to make the zad modules as a tool
to study the zero product determined algebras, more explicitly, to find some new classes of
zero product determined algebras, especially in the infinite dimensional case. It seems natural
to study the annihilator ideal of the $A$-module $V$ if we need to build a bridge between
the zero product determined property of $A$ and the zad property of $V$. When we studied the
annihilator of indecomposable direct summands of the regular $A$-module, the results on our draft turned out
to be a part of the blueprint of classifying all the (finite dimensional) zad $A$-modules.
We then accordingly changed our target, which led to this paper as our first attempt to
study the structure and representation theory of $A$ through the zad $A$-modules.

The content of this article is organized as follows. Section \ref{xxsec2} gives some general
observations for zad modules, for most of which one can find analogues in \cite[Section 2]{Bresar}.
In Section \ref{xxsec3}, we characterize the finite dimensional irreducible zad $A$-modules and
principal projective zad $A$-modules. As a consequence of the characterization, some
semiperfect algebras (infinite dimensional in general) and
quasi-hereditary cellular algebras are shown to be zero product determined.
Let us end the introduction with some terminological convention. Throughout the paper,
all {\em algebras} are {\em unital associative algebras} over a fixed field $F$, and all
{\em modules} are {\em unital left modules}. Both algebras and modules may be
infinite dimensional unless separate declaration.

\section{Basic Properties}\label{xxsec2}

In this section we gather together several basic properties of zad modules. One may find many of
them are analogues of the results in \cite[Section 2]{Bresar}, but we exhibit here in a self-contained
manner for completeness. Let $A$ be
an $F$-algebra and $V$ be an $A$-module. We start by remarking that (\ref{eq1.2}) is equivalent to
\begin{equation}\label{eq2.1}
f(xy,v)=f(x,yv)\quad {\rm for\ all}\quad x,y\in A, v\in V,
\end{equation}
as well as to
\begin{equation}\label{eq2.2}
f(x,v)=f(1,xv)\quad {\rm for\ all}\quad x\in A, v\in V.
\end{equation}
In the following we will use (\ref{eq2.1}) or (\ref{eq2.2}) instead of (\ref{eq1.2}) without comments.

The following lemma shows that the bilinear map from the definition of a zad module may has its range
in an arbitrary $F$-linear space, not necessarily in the field $F$ itself. This elementary fact is
important for applications.

\begin{lemma}\label{xx2.1}
Let $V$ be an $A$-module. Then $V$ is zad if and only if for every bilinear map $f$ from $A\times V$
into an arbitrary vector space $U$ with the property that
$$
f(a,m)=0\quad{\text whenever}\quad am=0,
$$
then there exists a linear map $\Phi: V\rightarrow U$ such that
$$
f(x,v)=\Phi(xv)\quad {\text for\ all}\quad x\in A, v\in V.
$$
\end{lemma}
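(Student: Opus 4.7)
The plan is to reduce the $U$-valued case to the $F$-valued case by composing with linear functionals on $U$. The ``if'' direction is immediate by taking $U = F$, so the content is in the ``only if'' direction.

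Suppose $V$ is zad (in the sense of the original definition) and let $f \colon A \times V \to U$ be a bilinear map satisfying $f(a,m) = 0$ whenever $am = 0$. The natural candidate for $\Phi$ is
\[
\Phi(v) := f(1,v),
\]
which is $F$-linear in $v$. What must be verified is the equivalent identity (\ref{eq2.2}), namely that $f(x,v) = f(1,xv)$ for all $x \in A$ and $v \in V$.

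To this end I would introduce the ``error'' bilinear map $g \colon A \times V \to U$ defined by $g(x,v) = f(x,v) - f(1,xv)$. It is bilinear, and if $xv = 0$ then $f(x,v) = 0$ by hypothesis on $f$ while $f(1,xv) = f(1,0) = 0$, so $g$ also vanishes on pairs with zero action. Now for each linear functional $\lambda \in U^*$, the composite $\lambda \circ g \colon A \times V \to F$ is $F$-bilinear and vanishes whenever $xv = 0$; since $V$ is zad, the equivalent reformulation (\ref{eq2.2}) gives
\[
\lambda(g(x,v)) = \lambda(g(1,xv)) \quad \text{for all } x \in A,\ v \in V.
\]
But $g(1,xv) = f(1,xv) - f(1, 1 \cdot xv) = 0$, so $\lambda(g(x,v)) = 0$ for every $\lambda \in U^*$. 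Since $U^*$ separates points of the $F$-vector space $U$ (any nonzero vector extends to a basis and we project onto it), we conclude $g \equiv 0$, which is exactly the desired identity $f(x,v) = \Phi(xv)$.

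There is no real obstacle here; the only non-formal input is the fact that $U^*$ separates points of $U$, which rests on the existence of a basis for an arbitrary vector space. The argument deliberately avoids choosing a basis of $U$ inside the proof itself by routing everything through the collection of all scalar-valued functionals, which keeps the reduction to the defining $F$-valued case clean and coordinate-free.
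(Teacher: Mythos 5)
Your argument is correct and is essentially the paper's own proof: both reduce the $U$-valued case to the scalar case by composing with arbitrary linear functionals on $U$ and then invoking the fact that $U^*$ separates points. The only cosmetic difference is that you apply the zad hypothesis to $\lambda\circ g$ for the auxiliary map $g(x,v)=f(x,v)-f(1,xv)$, whereas the paper applies it directly to $\alpha\circ f$; the content is identical.
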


\begin{proof}
We only need to prove the necessity. For every bilinear map $f: A\times V\rightarrow U$ satisfying
(\ref{eq1.1}) with $U$ an arbitrary $F$-space, we take an arbitrary linear functional $\alpha$ on $U$
and define $f_{\alpha}:=\alpha \circ f: A\times V\rightarrow F$. It is clear that $f_{\alpha}$ satisfies
(\ref{eq1.1}) and hence $f_{\alpha}(x,v)=f_{\alpha}(1,xv)$ for all $x\in A, v\in V$. Since $\alpha$
is an arbitrary linear functional on $U$, this actually implies that $f$ satisfies (\ref{eq2.2}).
\end{proof}

\begin{lemma}\label{xx2.2}
The $A$-module $V$ is zad if and only if for every bilinear map $f:A\times V\rightarrow F$ satisfying
{\rm (\ref{eq1.1})}, there is $\sum_i f(x_i,v_i)=0$ whenever $\sum_i x_iv_i=0$.
\end{lemma}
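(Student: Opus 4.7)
The plan is a direct, elementary equivalence proof. Both directions are short; the key trick lies in the reverse direction, where we must cook up a two-term sum that collapses to zero in $V$ and exploit the hypothesis.

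For the forward implication, assume $V$ is zad and let $f:A\times V\to F$ satisfy (\ref{eq1.1}). By definition there exists a linear $\Phi:V\to F$ with $f(x,v)=\Phi(xv)$. Then for any finite sum with $\sum_i x_iv_i=0$, linearity of $\Phi$ gives
\[
\sum_i f(x_i,v_i)=\sum_i \Phi(x_iv_i)=\Phi\Bigl(\sum_i x_iv_i\Bigr)=\Phi(0)=0.
\]

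For the reverse implication, suppose $f:A\times V\to F$ satisfies (\ref{eq1.1}) and the stronger sum-vanishing property. Define $\Phi:V\to F$ by $\Phi(v):=f(1,v)$; this is $F$-linear because $f$ is linear in its second slot. The main step is to check (\ref{eq2.2}). For arbitrary $x\in A$ and $v\in V$ consider the two-term sum
\[
x\cdot v+(-1)\cdot(xv)=xv-xv=0
\]
in $V$. By the hypothesis applied to this trivial relation, $f(x,v)+f(-1,xv)=0$; bilinearity of $f$ turns this into $f(x,v)=f(1,xv)=\Phi(xv)$, which is exactly (\ref{eq2.2}) and hence (\ref{eq1.2}).

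There is no real obstacle. The only conceptual point worth naming is the role of the unit: producing the witness $\Phi$ from $f$ alone requires a canonical lift from $V$ back to $A\times V$, and $v\mapsto(1,v)$ is the obvious one because $V$ is unital. Once $\Phi(v):=f(1,v)$ is written down, the sum-vanishing hypothesis is used in the minimal nontrivial case (two summands) to identify $f(x,v)$ with $f(1,xv)$.
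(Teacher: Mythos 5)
Your proof is correct. The forward direction is the same as the paper's (which it dismisses as obvious). In the reverse direction you and the paper build the witness $\Phi$ differently: the paper defines $\Phi\bigl(\sum_i x_iv_i\bigr):=\sum_i f(x_i,v_i)$, so the entire content of its argument is that this assignment is well-defined, which consumes the sum-vanishing hypothesis for arbitrary finite sums; you instead take the canonical choice $\Phi:=f(1,-)$, which is well-defined and linear for free, and then invoke the hypothesis only for the single two-term relation $xv+(-1)(xv)=0$ to get $f(x,v)=f(1,xv)$, i.e.\ (\ref{eq2.2}). The two computations are essentially the same under the hood (the paper's well-definedness check specializes to exactly your two-term identity), but your phrasing is slightly cleaner and makes visible a small extra fact: the two-summand case of the vanishing condition already implies the general one. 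Both arguments lean on unitality of $A$ and of $V$ in the same place -- the paper needs $v=1\cdot v$ so that $\Phi$ is defined on all of $V$, and you need it so that $f(1,-)$ is the right lift.
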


\begin{proof}
We only need to prove the sufficiency. If $f:A\times V\rightarrow F$ is a bilinear map such that
$\sum_i f(x_i,v_i)=0$ whenever $\sum_i x_iv_i=0$, then the linear map $\Phi: V\rightarrow F$ given by
$$
\Phi\left(\sum_i x_iv_i\right):=\sum_i f(x_i,v_i)
$$
is well-defined and satisfies (\ref{eq1.2}).
\end{proof}

By the above lemma, it is reasonable to study the kernel $T_{ker}$ of the linear map $A\otimes V\rightarrow V$
defined by $x\otimes v \mapsto xv$, i.e.,
$$
T_{ker}=\left\{\sum_i x_i\otimes v_i\in  A\otimes V\ \left| \  \sum_i x_iv_i=0\right\}\right..
$$
Let us define
$$
S_{A\otimes V}:=\{x\otimes v\in A\otimes V\ |\ xv=0\}.
$$
It is clear that $F$-span $S_{A\otimes V}\subseteq T_{ker}$.

\begin{lemma}\label{xx2.3}
The $A$-module $V$ is zad if and only if $F{\rm{\text-span}}\ S_{A\otimes V}=T_{ker}$.
\end{lemma}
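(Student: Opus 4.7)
The plan is to recognize this as essentially a duality statement. A bilinear map $f:A\times V\to F$ corresponds to a linear functional $\tilde f:A\otimes V\to F$. The hypothesis (\ref{eq1.1}) says precisely that $\tilde f$ vanishes on $S_{A\otimes V}$, hence on $F\text{-span}\, S_{A\otimes V}$. By Lemma~\ref{xx2.2}, $V$ is zad iff every such $\tilde f$ also vanishes on $T_{ker}$. Since the inclusion $F\text{-span}\, S_{A\otimes V}\subseteq T_{ker}$ is automatic, the equivalence then reduces to the elementary vector-space fact that two nested subspaces coincide iff they are cut out by the same family of linear functionals.

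For the concrete write-up, I would argue as follows. For the easy direction, assume $F\text{-span}\, S_{A\otimes V}=T_{ker}$ and let $f:A\times V\to F$ satisfy (\ref{eq1.1}). The induced $\tilde f$ vanishes on $S_{A\otimes V}$, hence on $T_{ker}=\ker\mu$, where $\mu:A\otimes V\to V$ is $x\otimes v\mapsto xv$. Thus $\tilde f$ factors as $\Phi\circ\mu$ for some linear $\Phi:\image\mu\to F$, and since $V$ is unital, $\image\mu=V$ (as $1\cdot v=v$). Extending $\Phi$ arbitrarily if needed, we obtain $f(x,v)=\Phi(xv)$, so $V$ is zad.

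For the converse, it is cleanest to invoke Lemma~\ref{xx2.1} with codomain $U:=(A\otimes V)/(F\text{-span}\, S_{A\otimes V})$. Let $q:A\otimes V\to U$ be the quotient map and set $f(x,v):=q(x\otimes v)$. Then $f$ is bilinear and $f(x,v)=0$ whenever $xv=0$, because then $x\otimes v\in S_{A\otimes V}\subseteq \ker q$. Since $V$ is zad, Lemma~\ref{xx2.1} supplies a linear $\Phi:V\to U$ with $f(x,v)=\Phi(xv)$. For any element $\sum_i x_i\otimes v_i\in T_{ker}$ one then has
\[
q\Bigl(\sum_i x_i\otimes v_i\Bigr)=\sum_i f(x_i,v_i)=\Phi\Bigl(\sum_i x_iv_i\Bigr)=\Phi(0)=0,
\]
so $\sum_i x_i\otimes v_i\in\ker q=F\text{-span}\, S_{A\otimes V}$, yielding the reverse inclusion.

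I do not anticipate a genuine obstacle: both directions are short once one passes to the tensor product and uses unitality to identify $\image\mu$ with $V$. The only step requiring a little care is the converse, where using Lemma~\ref{xx2.1} with the codomain chosen to be the quotient $U$ avoids the need to manually extract enough linear functionals on $T_{ker}/F\text{-span}\, S_{A\otimes V}$; this is where Lemma~\ref{xx2.1} earns its keep.
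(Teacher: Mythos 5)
Your proof is correct, but it runs on different machinery than the paper's in both directions. For ``$F$-span $S_{A\otimes V}=T_{ker}$ implies $V$ zad'', the paper rewrites an arbitrary element of $T_{ker}$ as a combination of elements of $S_{A\otimes V}$ and feeds the result into Lemma \ref{xx2.2}, whereas you factor the induced functional $\tilde f$ through the multiplication map $\mu$ directly (and unitality gives $\image\mu=V$, so the ``extending $\Phi$ if needed'' caveat is not actually needed). For the converse, the paper argues by contraposition, choosing a linear functional $\alpha$ that vanishes on $S_{A\otimes V}$ but not on $T_{ker}$ and observing that $f(x,v)=\alpha(x\otimes v)$ satisfies (\ref{eq1.1}) but not (\ref{eq1.2}); you instead run a direct argument with the single canonical bilinear map $(x,v)\mapsto q(x\otimes v)$ valued in the quotient $U=(A\otimes V)/(F\text{-span}\, S_{A\otimes V})$, which forces you to invoke the vector-valued reformulation in Lemma \ref{xx2.1}. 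The two converses are dual to one another --- the paper's separating functional is exactly a nonzero functional on your $U$ --- so neither is more general; yours buys a choice-free, non-contrapositive argument at the cost of leaning on Lemma \ref{xx2.1}, while the paper stays with scalar-valued maps throughout and uses Lemma \ref{xx2.2} instead.
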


\begin{proof}
Suppose that $V$ is zad and $F{\rm{\text-span}}\ S_{A\otimes V}\subsetneq T_{ker}$. Then there exists a linear functional
$\alpha$ on $A\otimes V$ such that $\alpha(S_{A\otimes V})=0$ but $\alpha(T_{ker})\neq 0$. The bilinear
map $f: A\times V\rightarrow F$ defined by $f(x,v)=\alpha(x\otimes v)$ satisfies (\ref{eq1.1}) but not
(\ref{eq1.2}). To prove the converse, by the universal property of tensor product,
for every bilinear map $f: A\times V\rightarrow F$, there exists a unique linear
map $\alpha_f: A\otimes V\rightarrow F$ such that $\alpha_f(x\otimes v)=f(x,v)$. Now assume $f$ satisfies
the condition (\ref{eq1.1}). Let $\sum_i x_i\otimes v_i\in  A\otimes V$ with $\sum_i x_iv_i=0$.
Then there exist some $a_j\otimes m_j\in  S_{A\otimes V}$ such that $\sum_j a_j\otimes m_j=
\sum_i x_i\otimes v_i$. Hence
$$\begin{aligned}
\sum_i f(x_i,v_i)&=\sum_i \alpha_f(x_i\otimes v_i)=\alpha_f \bigr(\sum_i x_i\otimes v_i\bigr)\\
&=\sum_j \alpha_f(a_j\otimes m_j)=\sum_j f(a_j,m_j)=0.
\end{aligned}$$
Then $V$ is zad follows from Lemma \ref{xx2.2}.
\end{proof}

In practice, we find that it is difficult to determine whether the identity $F$-span $S_{A\otimes V}=T_{ker}$
holds or not, mainly because one element in $A\otimes V$ may have different expressions. The following
lemma provides an, to some extend, easy way to study the zad property of $A$-modules.

\begin{lemma}\label{xx2.4}
The $A$-module $V$ is zad if and only if
$$
x\otimes v-1\otimes xv\in F{\rm{\text-span}}\ S_{A\otimes V}
$$
for every $x\in A, v\in V$.
\end{lemma}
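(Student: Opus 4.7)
The plan is to derive this from Lemma \ref{xx2.3}, which reduces the zad property to the equality $F\text{-span}\,S_{A\otimes V}=T_{ker}$. The identity $x\otimes v-1\otimes xv$ always lies in $T_{ker}$, since applying the multiplication map $A\otimes V\to V$ sends it to $xv-xv=0$. So one direction is immediate: if $V$ is zad, then the equality of Lemma \ref{xx2.3} forces $x\otimes v-1\otimes xv\in F\text{-span}\,S_{A\otimes V}$ for every $x,v$.

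For the converse, I would assume the displayed containment and try to show $T_{ker}\subseteq F\text{-span}\,S_{A\otimes V}$ (the reverse inclusion is automatic). Given an arbitrary element $t=\sum_i x_i\otimes v_i\in T_{ker}$, so that $\sum_i x_iv_i=0$, the natural trick is to telescope each simple tensor against its ``normalization'' at $1$: write
$$
\sum_i x_i\otimes v_i=\sum_i\bigl(x_i\otimes v_i-1\otimes x_iv_i\bigr)+1\otimes\Bigl(\sum_i x_iv_i\Bigr).
$$
Since $\sum_i x_iv_i=0$, the final term vanishes, and each summand on the right is in $F\text{-span}\,S_{A\otimes V}$ by hypothesis. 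Thus $t\in F\text{-span}\,S_{A\otimes V}$, and invoking Lemma \ref{xx2.3} finishes the argument.

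There is no real obstacle here; the content is essentially that $T_{ker}$ is generated, as a subspace, by the ``canonical'' elements $x\otimes v-1\otimes xv$ modulo the kernel relations. The only thing to verify carefully is that the trivially vanishing term $1\otimes\sum_i x_iv_i$ is legitimately zero in $A\otimes V$ (it is, because the tensor product is bilinear and $\sum_i x_iv_i=0$ in $V$). Once this observation is in place, the lemma follows by combining the telescoping identity with Lemma \ref{xx2.3}.
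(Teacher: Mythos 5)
Your proof is correct. The forward direction coincides with the paper's (both just note $x\otimes v-1\otimes xv\in T_{ker}$ and apply Lemma \ref{xx2.3}). For the converse, however, you take a different route: you prove the subspace equality $F\text{-span}\,S_{A\otimes V}=T_{ker}$ directly, via the telescoping identity
$$
\sum_i x_i\otimes v_i=\sum_i\bigl(x_i\otimes v_i-1\otimes x_iv_i\bigr)+1\otimes\Bigl(\sum_i x_iv_i\Bigr),
$$
and then feed that back into Lemma \ref{xx2.3}. The paper instead returns to the definition of zad: given a bilinear $f$ satisfying (\ref{eq1.1}), it takes the induced functional $\alpha_f$ on $A\otimes V$, observes that $\alpha_f$ vanishes on $F\text{-span}\,S_{A\otimes V}$, hence on each $x\otimes v-1\otimes xv$, and concludes that $f$ satisfies (\ref{eq2.2}) without ever establishing the span equality. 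Your argument is arguably the more illuminating one, since it isolates the purely linear-algebraic fact that $T_{ker}$ is spanned by $S_{A\otimes V}$ together with the elements $x\otimes v-1\otimes xv$ (and indeed by $S_{A\otimes V}$ alone under the hypothesis); the paper's version is marginally shorter because it bypasses Lemma \ref{xx2.3} in the converse and works with functionals directly. Both are complete and correct.
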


\begin{proof}
If $V$ is a zad $A$-module, it follows from Lemma \ref{xx2.3} that
$x\otimes v-1\otimes xv\in T_{ker}=F{\rm{\text-span}}\ S_{A\otimes V}$.
Conversely, for an arbitrary bilinear map $f: A\times V\rightarrow F$ satisfying (\ref{eq1.1}),
by the universal property of tensor product, there exists a unique linear
map $\alpha_f: A\otimes V\rightarrow F$ such that $\alpha_f(x\otimes v)=f(x,v)$.
Then $\alpha_f$ vanishes on the subspace $F$-span $S_{A\otimes V}$. Hence
$\alpha_f(x\otimes v-1\otimes xv)=0$ for every $x\in A, v\in V$. In other words, $f$ satisfies (\ref{eq2.2})
and thus $V$ is a zad $A$-module.
\end{proof}

\begin{lemma}\label{xx2.5}
A homomorphic image of a zad $A$-module is also zad.
\end{lemma}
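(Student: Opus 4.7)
The plan is to pull back along the quotient map. Let $V$ be a zad $A$-module and $\pi\colon V\rightarrow W$ a surjective $A$-module homomorphism, so that $W$ is our arbitrary homomorphic image of $V$. Start with a bilinear map $g\colon A\times W\rightarrow F$ satisfying $g(a,w)=0$ whenever $aw=0$; the goal is to produce a linear $\Psi\colon W\rightarrow F$ with $g(x,w)=\Psi(xw)$.

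First I would form the pullback $f\colon A\times V\rightarrow F$ defined by $f(x,v):=g(x,\pi(v))$. This is clearly bilinear, and if $xv=0$ in $V$ then $x\pi(v)=\pi(xv)=0$ in $W$, so $f(x,v)=g(x,\pi(v))=0$. Hence $f$ satisfies (\ref{eq1.1}). Since $V$ is zad, there is a linear $\Phi\colon V\rightarrow F$ such that $f(x,v)=\Phi(xv)$, equivalently (by (\ref{eq2.2})) $\Phi(v)=f(1,v)=g(1,\pi(v))$ for all $v\in V$.

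Next I would descend $\Phi$ to $W$. If $v\in\ker\pi$, then the previous identity gives $\Phi(v)=g(1,0)=0$, so $\Phi$ vanishes on $\ker\pi$. Because $\pi$ is surjective, $\Phi$ factors uniquely through $\pi$, yielding a linear map $\Psi\colon W\rightarrow F$ with $\Psi\circ\pi=\Phi$. Finally, for any $x\in A$ and $w=\pi(v)\in W$, one checks
$$
g(x,w)=g(x,\pi(v))=f(x,v)=\Phi(xv)=\Psi(\pi(xv))=\Psi(x\pi(v))=\Psi(xw),
$$
so (\ref{eq1.2}) holds for $g$ and $W$ is zad.

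There is really no serious obstacle here; the only thing that might look delicate is the well-definedness of $\Psi$, but this is handled precisely by the observation that $\Phi(v)=g(1,\pi(v))$ depends only on $\pi(v)$. If one preferred, the argument could be recast via Lemma~\ref{xx2.4} using the induced surjection $\id\otimes\pi\colon A\otimes V\rightarrow A\otimes W$, which sends $F$-span$\,S_{A\otimes V}$ into $F$-span$\,S_{A\otimes W}$ and maps $x\otimes v-1\otimes xv$ to $x\otimes\pi(v)-1\otimes x\pi(v)$; but the pullback argument above is more direct.
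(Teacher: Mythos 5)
Your proof is correct. It takes a genuinely different (dual) route from the paper's: the paper works on the ``element'' side, invoking the tensor-product criterion of Lemma~\ref{xx2.4} and pushing the relations $x\otimes v-1\otimes xv\in F\text{-span}\,S_{A\otimes V}$ forward along $\id\otimes\varphi$ into $F\text{-span}\,S_{A\otimes U}$; you instead work on the ``functional'' side, pulling the bilinear map $g$ on $A\times W$ back to $f(x,v)=g(x,\pi(v))$ on $A\times V$, applying the zad property of $V$, and then descending $\Phi$ to $W$ after checking it kills $\ker\pi$. Your argument is more elementary and self-contained in that it uses only the definition (plus the reformulation (\ref{eq2.2})) and needs none of the tensor-product machinery of Lemmas~\ref{xx2.3}--\ref{xx2.4}; the paper's version is shorter once that machinery is in place and fits the pattern used elsewhere (e.g.\ in the proof of Theorem~\ref{thm-zad-proj}). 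The one point that deserves care in your version --- well-definedness of $\Psi$ --- is handled correctly by the identity $\Phi(v)=g(1,\pi(v))$, which shows $\Phi$ factors through $\pi$.
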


\begin{proof}
Let $\varphi: V\rightarrow U$ be a surjective homomorphism of $A$-modules. If $V$ is zad, then
$x\otimes v-1\otimes xv\in F{\rm{\text-span}}\ S_{A\otimes V}$ for all $x\in A, v\in V$ by Lemma \ref{xx2.4}.
For every $a\in A, u\in U$, the surjection of $\varphi$ shows that $a\otimes u-1\otimes au\in
F{\rm{\text -span}}\{x\otimes \varphi(v)\in A\otimes U\ |\ xv=0\}$. Note that
$F{\rm{\text -span}}\{x\otimes \varphi(v)\in A\otimes U\ |\ xv=0\}$ is a subset of $F$-span $S_{A\otimes U}$,
since $x\varphi(v)=\varphi(xv)=0$. Then again using Lemma \ref{xx2.4} we obtain that $U$ is a zad $A$-module.
\end{proof}

The following result is elementary but is needed.

\begin{lemma}\label{xx2.6}
The $A$-modules $V_1, V_2,\ldots, V_n$ are zad if and only if their direct sum $\oplus_{i=1}^n V_i$ is zad.
\end{lemma}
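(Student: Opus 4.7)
The plan is to derive both directions from the two characterisations already established, namely Lemma \ref{xx2.4} (pointwise condition on elementary tensors) and Lemma \ref{xx2.5} (closure under homomorphic images).

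For the ``only if'' direction, set $V := \oplus_{i=1}^n V_i$ and assume $V$ is zad. Each $V_i$ arises as the image of the canonical projection $V \twoheadrightarrow V_i$, which is a surjective homomorphism of $A$-modules. An immediate application of Lemma \ref{xx2.5} yields that each $V_i$ is zad.

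For the ``if'' direction, assume each $V_i$ is zad and verify the criterion of Lemma \ref{xx2.4} for $V$. Given $x \in A$ and $v \in V$, decompose $v = v_1 + \cdots + v_n$ with $v_i \in V_i$. Then
$$
x \otimes v - 1 \otimes xv = \sum_{i=1}^n \bigl(x \otimes v_i - 1 \otimes xv_i\bigr),
$$
where each summand lies in $A \otimes V_i \subseteq A \otimes V$ under the natural inclusion induced by $V_i \hookrightarrow V$. By Lemma \ref{xx2.4} applied to the zad module $V_i$, each $x \otimes v_i - 1 \otimes xv_i$ belongs to $F\text{-span } S_{A \otimes V_i}$. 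Since the inclusion $A \otimes V_i \hookrightarrow A \otimes V$ carries $S_{A \otimes V_i}$ into $S_{A \otimes V}$ (an elementary tensor $x \otimes v_i$ with $xv_i = 0$ in $V_i$ remains annihilated in $V$), we conclude $x \otimes v - 1 \otimes xv \in F\text{-span } S_{A \otimes V}$. Lemma \ref{xx2.4} then gives that $V$ is zad.

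There is no real obstacle in this lemma; the only point requiring a brief check is that $S_{A \otimes V_i}$ sits inside $S_{A \otimes V}$ under the canonical embedding, which is immediate from the definition of the $A$-module structure on the direct sum.
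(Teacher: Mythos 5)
Your proof is correct, and the easy direction (direct sum zad $\Rightarrow$ each summand zad, via Lemma \ref{xx2.5}) coincides with the paper's; note only that you have swapped the labels ``if'' and ``only if'' relative to the statement, though the content is unambiguous. For the substantive direction the paper argues directly with bilinear maps: it reduces to $n=2$, restricts a bilinear map $f:A\times(V_1\oplus V_2)\to F$ satisfying (\ref{eq1.1}) to each $A\times V_i$, observes that the restriction still satisfies (\ref{eq1.1}), applies the zad property of each $V_i$ to get $f(x,(v_i,0))=f(1,(xv_i,0))$, and adds. You instead verify the tensor-span criterion of Lemma \ref{xx2.4}: decompose $x\otimes v-1\otimes xv$ componentwise, apply Lemma \ref{xx2.4} to each $V_i$, and check that the canonical embedding $A\otimes V_i\hookrightarrow A\otimes V$ carries $S_{A\otimes V_i}$ into $S_{A\otimes V}$ because the module structure on the direct sum is componentwise. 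The two arguments are dual to one another --- your embedding check is exactly the same observation as the paper's check that the restricted bilinear map satisfies (\ref{eq1.1}) --- but your version handles general $n$ in one stroke without the reduction to $n=2$, at the mild cost of routing through the tensor-product formalism rather than the definition.
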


\begin{proof}
The ``if" part follows from Lemma \ref{xx2.5}. For the ``only if" part it is enough to consider the case of $n=2$.
Thus let $V_1$ and $V_2$ be two zad $A$-modules and let $U:=V_1\oplus V_2$. For every bilinear map $f:A\times U
\rightarrow F$ satisfying (\ref{eq1.1}), we have $f(a,(m_1,0))=0$ if $am_1=0$, where $a\in A, m_1\in V_1$.
The zad property of $V_1$ implies that
$$
f(x,(v_1,0))=f(1,(xv_1,0))
$$
for all $x\in A, v_1\in V_1$. Similarly,
$$
f(x,(0,v_2))=f(1,(0,xv_2))
$$
for all $x\in A, v_2\in V_2$. Therefore we have
$$\begin{aligned}
f(x,(v_1,v_2))&=f(x,(v_1,0))+f(x,(0,v_2))=f(1,(xv_1,0))+f(1,(0,xv_2))\\
&=f(1,(xv_1,xv_2))=f(1,x(v_1,v_2))
\end{aligned}$$
for all $x\in A$ and $v_1\in V_1, v_2\in V_2$. Hence $f$ satisfies (\ref{eq2.2}) and $U$ is zad.
\end{proof}

We now end this section by an application of the zad $A$-modules. Let $V$ and $U$ be two $A$-modules.
A linear map $\varphi: V\rightarrow U$ is called a {\em zero action preserving} map, if for every
$a\in A, m\in V$, there is $a\varphi(m)=0$ whenever $am=0$.

\begin{proposition}\label{xx2.7}
Let $V$ and $U$ be two $A$-modules. Let $\varphi: V\rightarrow U$ be a zero action preserving linear map.
If $V$ is zad, then $\varphi$ is a homomorphism of $A$-modules.
\end{proposition}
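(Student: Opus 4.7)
The plan is to package the equality $\varphi(av)=a\varphi(v)$ as the conclusion of the zad property applied to a carefully chosen bilinear map. Concretely, I would define
\[
f\colon A\times V\longrightarrow U,\qquad f(a,v):=a\varphi(v).
\]
This map is clearly $F$-bilinear because module multiplication is bilinear and $\varphi$ is linear.

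Next I would verify the zero-action hypothesis \eqref{eq1.1} for $f$: if $av=0$, then the zero action preserving property of $\varphi$ gives $a\varphi(v)=0$, i.e.\ $f(a,v)=0$. Since the codomain is $U$ rather than $F$, I would invoke Lemma~\ref{xx2.1} to apply the zad property of $V$ to $f$: there exists a linear map $\Phi\colon V\to U$ such that
\[
a\varphi(v)=f(a,v)=\Phi(av)\quad\text{for all }a\in A,\ v\in V.
\]

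Finally, specializing to $a=1$ yields $\varphi(v)=\Phi(v)$ for every $v\in V$, so $\Phi=\varphi$. Substituting this back gives $a\varphi(v)=\varphi(av)$ for all $a\in A$ and $v\in V$, which is precisely the statement that $\varphi$ is an $A$-module homomorphism. There is no real obstacle here beyond recognizing that Lemma~\ref{xx2.1} is needed because the natural target of $f$ is the module $U$, not the base field.
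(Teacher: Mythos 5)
Your proposal is correct and follows exactly the same route as the paper's proof: define $f(a,v)=a\varphi(v)$, check hypothesis (\ref{eq1.1}) via the zero action preserving property, invoke Lemma \ref{xx2.1} to handle the module-valued target, and use unitality to conclude $\Phi=\varphi$. No differences worth noting.
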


\begin{proof}
Define a bilinear map $f:A\times V\rightarrow U$ by $f(x,v)=x\varphi(v)$. Then Lemma \ref{xx2.1}
implies that there exists a linear map $\Phi: V\rightarrow U$ such that $f(x,v)=\Phi(xv)$ for all
$x\in A, v\in V$. Note that $A$ is unital. We have $\Phi=\varphi$.
\end{proof}

The problem becomes interesting if we do not assume the algebra $A$ is unital. Furthermore, it is hopeful that
the basic idea in Proposition \ref{xx2.7} will play an important role in the functional analytic context,
see \cite[Subsection 2.4]{Bresar}. However, we do not plan to study here on this perspective
because of the main target of this paper mentioned in the introduction.

\section{Finite Dimensional Zero Action Determined Modules}\label{xxsec3}

In this section, we study the structure and representation theory of $A$ through the zad $A$-modules.
All the terminologies and basic results related to representation theory, which we used, one can find in the book \cite{CuRe}.
Let us start by recalling the definition of a {\em zero product determined} algebra.

An $F$-algebra $A$ is zero product determined if for every bilinear map $f: A\times A\rightarrow F$
with the property that
$$
f(a,b)=0\quad {\rm whenever}\quad ab=0,
$$
then there exists a linear map $\Phi: A\rightarrow F$ such that
$$
f(x,y)=\Phi(xy)\quad {\rm for\ all}\quad x,y\in A.
$$
The following elementary observation is need.

\begin{lemma}\label{xx3.1}
Let $A$ be an algebra. The following statements are equivalent:
\begin{enumerate}
\item[(1)] the algebra $A$ is zero product determined;

\item[(2)] every $A$-module $V$ is zad;

\item[(3)] the regular $A$-module is zad.
\end{enumerate}
\end{lemma}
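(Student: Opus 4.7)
The equivalence splits naturally into three implications: $(2)\Rightarrow(3)$, $(3)\Rightarrow(1)$, and $(1)\Rightarrow(2)$. The first of these is immediate, as the regular module is itself an $A$-module. For $(3)\Rightarrow(1)$, one simply observes that the defining condition for the regular $A$-module to be zad is literally the definition of $A$ being zero product determined: a bilinear $f:A\times A\to F$ vanishing on pairs with $ab=0$ factors as $\Phi\circ\mu$, where $\mu$ is multiplication. So the whole content lies in $(1)\Rightarrow(2)$.

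For $(1)\Rightarrow(2)$, the plan is to bootstrap the zero product determined property of $A$ up to an arbitrary $A$-module $V$ by freezing a vector. Given a bilinear map $f:A\times V\to F$ satisfying (\ref{eq1.1}), fix $v\in V$ and define $h_v:A\times A\to F$ by
\[
h_v(x,y):=f(x,yv).
\]
The crucial observation is that $h_v$ inherits the vanishing hypothesis: if $xy=0$, then $(xy)v=x(yv)=0$, so $f(x,yv)=0$ by (\ref{eq1.1}). Since $A$ is zero product determined, there exists a linear functional $\Phi_v:A\to F$ with $h_v(x,y)=\Phi_v(xy)$ for all $x,y\in A$.

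Specialising this identity gives the desired factorisation. Setting $y=1$ yields $f(x,v)=\Phi_v(x)$, while setting $x=1$ yields $\Phi_v(y)=f(1,yv)$. Combining these, we obtain $f(x,v)=f(1,xv)$ for every $x\in A$ and $v\in V$, which is exactly the equivalent reformulation (\ref{eq2.2}) of the zad condition. So defining $\Phi:V\to F$ by $\Phi(w):=f(1,w)$ works.

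I do not expect any serious obstacle. The main conceptual point is simply to guess the right bilinear map on $A\times A$ to which the zero product determined hypothesis should be applied; once one writes down $h_v(x,y)=f(x,yv)$, everything follows from unital specialisations. No finite dimensionality, no structure of $V$, and no splitting of $V$ into summands is used, so the argument works uniformly for every $A$-module $V$.
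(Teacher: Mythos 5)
Your proposal is correct and follows essentially the same route as the paper: the key step $(1)\Rightarrow(2)$ is carried out by fixing $v$ and applying the zero product determined hypothesis to the bilinear map $(x,y)\mapsto f(x,yv)$, exactly as in the paper's proof (where it is called $f_v$), with the other two implications treated as immediate. The only cosmetic difference is that you specialise the factorisation $h_v(x,y)=\Phi_v(xy)$ at $x=1$ and $y=1$ separately, whereas the paper invokes the equivalent reformulation $f_v(x,1)=f_v(1,x)$ directly.
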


\begin{proof}
$(1)\Rightarrow (2)$. It follows from \cite[Remark 2.11]{BGLLZ}. We give the proof here for the reader's convenience.
For every bilinear map $f: A\times V\rightarrow F$ satisfying (\ref{eq1.1}),
and for every $v\in V$, we define a bilinear map $f_v: A\times A\rightarrow F$ by
$f_v(x,y)=f(x,yv)$. It is clear that $f_v(a,b)=0$ whenever $ab=0$. Since $A$ is zero product determined,
$f_v(x,1)=f_v(1,x)$ for all $x\in A$. In other words, $f(x,v)=f(1,xv)$ for all $x\in A, v\in V$.
We have $f$ satisfies (\ref{eq2.2}) and hence $V$ is zad.

The claims $(2)\Rightarrow (3)$ and $(3)\Rightarrow (1)$ are clear.
\end{proof}

Let $V$ be an $A$-module. We denote $\Ann_{A}(V)$ the annihilator ideal of $V$ in $A$, i.e.,
$$
\Ann_{A}(V)=\{a\in A\ |\ av=0,\ \forall\ v\in V\}.
$$
Let $J$ be an ideal of $A$ contained in $\Ann_A(V)$. Then $V$ becomes an $A/J$-module in a natural way.
The following useful lemma will be frequently used in later proofs.

\begin{lemma}\label{lemma-zad-over-quotient}
Let $A$ be an algebra and $V$ be an $A$-module. Suppose that $J$ is an ideal of $A$ contained in $\Ann_A(V)$.
Then $V$ is a zad $A$-module if and only if $V$ is a zad $A/J$-module.
\end{lemma}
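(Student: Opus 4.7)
The plan is to use the quotient map $\pi\colon A\to A/J$ to transport bilinear maps back and forth between $A\times V$ and $A/J\times V$, exploiting the hypothesis $J\subseteq \Ann_A(V)$ to make everything well-defined. Note first that $A/J$ is unital (with identity $\pi(1)$) and $V$ is naturally a unital $A/J$-module, so the ``zad'' condition for $V$ over $A/J$ makes sense.

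For the direction ($V$ is zad over $A/J$ $\Rightarrow$ $V$ is zad over $A$), take any bilinear $f\colon A\times V\to F$ satisfying (\ref{eq1.1}). The key observation is that for every $j\in J$ and $v\in V$ we have $jv=0$ (because $J\subseteq\Ann_A(V)$), hence $f(j,v)=0$. This shows $f$ vanishes on $J\times V$ and therefore descends uniquely to a bilinear map $g\colon A/J\times V\to F$ with $g(\pi(a),v)=f(a,v)$. The hypothesis (\ref{eq1.1}) for $f$ translates directly to the analogous property for $g$ (since $\pi(a)v=0$ in the $A/J$-action is the same as $av=0$). Applying the zad property of $V$ over $A/J$ gives $\Phi\colon V\to F$ with $g(\pi(x),v)=\Phi(\pi(x)v)=\Phi(xv)$; reading this back yields $f(x,v)=\Phi(xv)$, as required.

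For the converse, start with a bilinear $g\colon A/J\times V\to F$ satisfying (\ref{eq1.1}) over $A/J$. Pull it back via $\pi$ to define $f\colon A\times V\to F$ by $f(a,v):=g(\pi(a),v)$. Then $f$ is bilinear, and $am=0$ implies $\pi(a)m=0$, so $f(a,m)=g(\pi(a),m)=0$; thus (\ref{eq1.1}) holds for $f$. Since $V$ is zad over $A$, there exists $\Phi\colon V\to F$ with $f(x,v)=\Phi(xv)$, i.e.\ $g(\pi(x),v)=\Phi(\pi(x)v)$. Because $\pi$ is surjective, this is exactly what is needed to conclude $g$ has the required form.

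I do not anticipate a real obstacle here: the only subtle point is the well-definedness of the descent in the first direction, which is immediate from $J\subseteq\Ann_A(V)$. One could alternatively phrase the whole argument using Lemma \ref{xx2.4}: the identity $x\otimes v-1\otimes xv$ in $A\otimes V$ is the image of $\pi(x)\otimes v-\pi(1)\otimes \pi(x)v$ in $A/J\otimes V$ under the surjection $A\otimes V\to A/J\otimes V$, and because $J\cdot V=0$ the kernel of this surjection is contained in $F\text{-span}\,S_{A\otimes V}$; this would give a slightly more conceptual proof but yields no new content.
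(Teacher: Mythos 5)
Your proof is correct and follows essentially the same route as the paper: both directions are handled by pulling back along the quotient map $\pi$ (for one implication) and descending via the well-definedness guaranteed by $J\subseteq\Ann_A(V)$ (for the other), exactly as in the paper's argument. The alternative sketch via Lemma \ref{xx2.4} is a fine remark but, as you note, adds nothing essential.
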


\begin{proof}
For simplicity, we write $\bar{A}$ for $A/J$.

$(\Rightarrow)$. For every bilinear map $f: \bar{A}\times V\rightarrow F$ satisfying (\ref{eq1.1}),
we define a bilinear map $\hat{f}: A\times V\rightarrow F$ by $\hat{f}(x,v)=f(\bar{x},v)$, where
$\bar{x}$ denote the canonical image of $x$ in $\bar{A}$. Clearly, $\hat{f}$ satisfies (\ref{eq1.1})
and hence $\hat{f}(x,v)=\hat{f}(1,xv)$ for all $x\in A, v\in V$. Therefore $f(\bar{x},v)=f(\bar{1},xv)
=f(\bar{1},\bar{x}v)$, i.e., $f$ satisfies (\ref{eq2.2}).

$(\Leftarrow)$. For every bilinear map $f: A\times V\rightarrow F$ satisfying (\ref{eq1.1}),
we define $\bar{f}: \bar{A}\times V\rightarrow F$ by $\bar{f}(\bar{x},v)=f(x,v)$. We claim that
$\bar{f}$ is well-defined. In fact, for any $x_1,x_2\in A$, if $\bar{x}_1=\bar{x}_2$, we have
$x_1-x_2\in J\subseteq \Ann_{A}(V)$. Then $(x_1-x_2)v=0$ for all $v\in V$, and hence $f(x_1-x_2,v)=0$.
In other words, $f(x_1,v)=f(x_2,v)$ for all $v\in V$ and $\bar{f}$ is well-defined.
Now it is easy to check that $\bar{f}$ is a bilinear map satisfying (\ref{eq1.1}).
Since $V$ is a zad $\bar{A}$-module, $\bar{f}(\bar{x},v)=\bar{f}(\bar{1},\bar{x}v)=\bar{f}(\bar{1},xv)$
for all $x\in A, v\in V$. Therefore, $f$ satisfies (\ref{eq2.2}).
\end{proof}

We are now in a position to give our first main result, which classifies finite dimensional
irreducible zad $A$-modules.

\begin{theorem}\label{xx3.3}
Let $V$ be a finite dimensional irreducible $A$-module. Then $V$ is zad if and only if
$\dim V> \dim \End_A(V)$ or $\dim \End_A(V)=1$.
\end{theorem}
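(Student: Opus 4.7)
The plan is to reduce the problem to a matrix-algebra situation and then check the criterion from Lemma~\ref{xx2.4} by explicit computation. Set $J=\Ann_A(V)$. By Lemma~\ref{lemma-zad-over-quotient}, $V$ is zad over $A$ if and only if it is zad over $A/J$, so after replacing $A$ by $A/J$ we may assume $V$ is a faithful simple $A$-module. Let $D=\End_A(V)$, a division $F$-algebra by Schur's lemma. Since $V$ is finite dimensional, the Jacobson density theorem identifies $A$ with $\End_D(V)\cong M_n(D)$, where $n=\dim_D V$. Writing $d=\dim_F D$, we have $\dim_F V=nd$ and $\dim_F\End_A(V)=d$, so the stated condition ``$\dim V>\dim\End_A(V)$ or $\dim\End_A(V)=1$'' becomes ``$n\ge 2$ or $D=F$.'' Fix a $D$-basis $e_1,\dots,e_n$ of $V$ and identify $A$ with $M_n(D)$ acting on $V=D^n$ in the standard way, with matrix units $e_{ij}$.

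For the sufficiency, the case $n=1$, $D=F$ is trivial since $A=V=F$. For the main case $n\ge 2$, I would apply Lemma~\ref{xx2.4}. Since the tensors $de_{ij}\otimes e_kd'$ (with $d,d'\in D$) span $A\otimes V$, $F$-bilinearity reduces the task to showing, for every choice of $i,j,k,d,d'$, that $de_{ij}\otimes e_kd'-1\otimes(de_{ij})(e_kd')$ lies in the $F$-linear span of $S_{A\otimes V}$. When $j\ne k$ the first summand itself is in $S_{A\otimes V}$. When $j=k$ but $i\ne j$, the key identity is
\[
(de_{ij}-e_{ii})\otimes(e_jd'+e_idd')\in S_{A\otimes V},
\]
verified by a direct multiplication. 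Of its four expansion terms, $de_{ij}\otimes e_idd'$ and $e_{ii}\otimes e_jd'$ already lie in $S_{A\otimes V}$, and $1\otimes e_idd'\equiv e_{ii}\otimes e_idd'$ modulo the span of $S_{A\otimes V}$ because $e_{ll}e_i=0$ for $l\ne i$; together these reductions collapse the identity into exactly the required congruence. For the remaining diagonal case $i=j=k$, pick an auxiliary index $l\ne i$ (which exists because $n\ge 2$) and use the analogous identity $(de_{ii}-de_{il})\otimes(e_id'+e_ld')\in S_{A\otimes V}$ to transport the claim to the previously handled $i\ne j$ case.

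For the necessity, suppose $n=1$ and $D\ne F$, so that $A=V=D$ as a left $D$-module. Because $D$ is a division ring, $am=0$ forces $a=0$ or $m=0$, and hence every $F$-bilinear map $f:D\times D\to F$ vacuously satisfies (\ref{eq1.1}). The bilinear maps of the form $(x,v)\mapsto\Phi(xv)$ form an $F$-subspace of dimension $d$ (the assignment $\Phi\mapsto((x,v)\mapsto\Phi(xv))$ is injective: setting $x=1$ recovers $\Phi$), whereas the full space of $F$-bilinear maps $D\times D\to F$ has dimension $d^2>d$. Hence some bilinear $f$ is not of the required form and $V$ fails to be zad.

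The main obstacle is the explicit construction of zero-product tensors in the sufficiency argument. The two identities above are not arbitrary, and the second is available only because $n\ge 2$ supplies an auxiliary index $l\ne i$; this is precisely the mechanism through which ``$n\ge 2$'' translates into enough zero products to fill out $T_{ker}$. Once these identities are in hand, the rest is routine bookkeeping.
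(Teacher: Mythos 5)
Your proof is correct, and its core is genuinely different from the paper's. Both arguments begin the same way: pass to $\bar A=A/\Ann_A(V)$ via Lemma~\ref{lemma-zad-over-quotient} and identify $\bar A$ with $M_n(D)$ (Wedderburn--Artin in the paper, Jacobson density in yours --- equivalent here since $V$ is finite dimensional; the harmless discrepancy between $D$ and $D^{\mathrm{op}}$ affects neither dimensions nor the division-algebra property). From that point the paper outsources the work: it quotes from Bre\v{s}ar's survey that $M_n(D)$ is zero product determined for $n\ge 2$ and that a division algebra is zero product determined only when it equals $F$, and then transfers these facts to the module $V$ through Lemma~\ref{xx3.1} and Lemma~\ref{xx2.6} (using that ${}_{\bar A}\bar A\cong V^{\oplus n}$). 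You instead verify the criterion of Lemma~\ref{xx2.4} directly on the module $D^n$: your two zero-product tensors $(de_{ij}-e_{ii})\otimes(e_jd'+e_idd')$ and $(de_{ii}-de_{il})\otimes(e_id'+e_ld')$ check out (each product vanishes, two of the four expansion terms lie in $S_{A\otimes V}$, and $1\otimes e_idd'\equiv e_{ii}\otimes e_idd'$ modulo the span), and the chain of congruences does collapse to $x\otimes v-1\otimes xv\in F\text{-span}\,S_{A\otimes V}$ in every case; your necessity argument for $n=1$, $D\ne F$ is a clean dimension count ($d^2>d$) replacing the citation of \cite[Proposition 2.8]{Bresar}. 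The trade-off: your route is self-contained and in effect reproves the module analogue of the Bre\v{s}ar--Gra\v{s}i\v{c}--Sanchez matrix-algebra result, at the cost of an explicit matrix-unit computation; the paper's route is shorter and makes visible the general principle that zad-ness of a simple module is equivalent to the zero-product-determined property of the simple quotient algebra acting on it.
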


\begin{proof}
Since $V$ is finite dimensional, then $\bar{A}:=A/ \Ann_{A}(V)$ is also finite dimensional,
which has a faithful irreducible module. Therefore, $\bar{A}$ is a simple algebra.
The well-known Wedderburn-Artin theorem implies that $\bar{A}$ is isomorphic to the full
matrix algebra $M_n(D)$, with $D$ a division $F$-algebra and $n\geq 1$.
Up to isomorphism, $V$ is the unique irreducible $\bar{A}$-module and the regular
$\bar{A}$-module is isomorphic to $V^{\oplus n}$. Particularly $\dim V=n\dim D$.

$(\Rightarrow)$. If $V$ is a zad $A$-module, then $V$ is a zad $\bar{A}$-module by Lemma \ref{lemma-zad-over-quotient}.
From Lemma \ref{xx2.6} and Lemma \ref{xx3.1} we
deduce that the algebra $\bar{A}$ is zero product determined. Assume that $\dim \End_A(V)\neq1$.
Then $\dim D>1$ since $\End_A(V)=\End_{\bar{A}}(V)\cong D^{\rm op}$, the opposite algebra of $D$.
We have known that $M_n(D)$ is zero product determined for $n\geq 2$ by \cite[Proposition 2.18]{Bresar},
see also \cite{BGS}, and $D$ is zero product determined if and only if $D=F$ by \cite[Proposition 2.8]{Bresar}.
Therefore, $n\geq 2$ and $\dim V=n\dim D> \dim \End_A(V)$

$(\Leftarrow)$. If $\dim \End_A(V)=1$, then $D=F$ since $\End_A(V)\cong D^{\rm op}$, which in turn
implies that $\bar{A}\cong M_n(F)$ is a zero product determined algebra by \cite[Propositions 2.8 and 2.18]{Bresar}.
If $\dim V> \dim \End_A(V)$, then  $n\geq 2$, and  $\bar{A}\cong M_n(D)$ is a zero product determined algebra by \cite[Proposition 2.18]{Bresar}.
Hence, in both cases, $V$ is a zad $A$-module by Lemma \ref{xx3.1} and Lemma \ref{lemma-zad-over-quotient}.
This completes the proof of the theorem.
\end{proof}

For a finite dimensional irreducible $A$-module $V$, to check the zad property of $V$, Theorem \ref{xx3.3}
tells us that we only need to compute the dimension of the invariants $\End_A(V)$.
If $V$ is an infinite dimensional irreducible $A$-module, then $\bar{A}$ is primitive.
The proof of Theorem \ref{xx3.3} exhibits implicitly that to find more examples of infinite dimensional
zero product determined algebras, it is better to work in the primitive algebras over
an algebraically closed field.

Let $A$ be a finite dimensional $F$-algebra. An finite dimensional irreducible $A$-module
$V$ is said to be absolutely irreducible, if $K\otimes V$ is an irreducible $(K\otimes A)$-module
for any field extension $F\subseteq K$, or equivalently, $\End_A(V)\cong F$. A finite dimensional algebra $A$ is splitting (or, the ground field $F$ is a splitting field for $A$ ) if all irreducible $A$-modules are absolutely irreducible.
The following two corollaries of Theorem \ref{xx3.3}
are of special interest for us.

\begin{corollary}\label{xx3.4}
Let $A$ be a finite dimensional $F$-algebra. If $V$ is an absolutely irreducible $A$-module,
then $V$ is zad.
\end{corollary}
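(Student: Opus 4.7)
The plan is essentially immediate from Theorem \ref{xx3.3}. Since $V$ is assumed to be absolutely irreducible, by the definition recalled just before the corollary we have $\End_A(V) \cong F$, so $\dim \End_A(V) = 1$. This places us directly in the second clause of the characterization in Theorem \ref{xx3.3}, and therefore $V$ is zad.

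The only thing worth flagging is that Theorem \ref{xx3.3} requires $V$ to be a \emph{finite dimensional} irreducible $A$-module. This is built into the hypothesis of the corollary: the phrase "absolutely irreducible $A$-module" over a finite dimensional algebra $A$ carries the finite dimensionality of $V$, as it refers to finite dimensional irreducibles. So no additional reduction is needed.

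There is no real obstacle here; the corollary is a direct translation of the criterion $\dim \End_A(V) = 1$ into the language of absolute irreducibility, and the proof will simply cite Theorem \ref{xx3.3}. I would write it as a single sentence.
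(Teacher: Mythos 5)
Your proof is correct and matches the paper's (implicit) argument exactly: the paper's definition of absolute irreducibility already packages finite dimensionality and the condition $\End_A(V)\cong F$, so the corollary follows from the second clause of Theorem \ref{xx3.3} in one line. Nothing further is needed.
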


\begin{corollary}\label{xx3.5}
Let $A$ be a finite dimensional splitting $F$-algebra.
then every irreducible $A$-module is zad.
\end{corollary}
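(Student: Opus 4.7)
The plan is to observe that Corollary \ref{xx3.5} is an immediate consequence of Corollary \ref{xx3.4} combined with the very definition of a splitting algebra. Specifically, saying that $A$ is splitting means, by the definition recalled just before Corollary \ref{xx3.4}, that every irreducible $A$-module is absolutely irreducible. So the forward step is a single invocation.

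In more detail, I would let $V$ be an arbitrary irreducible $A$-module. Since $A$ is splitting, $V$ is absolutely irreducible. Then Corollary \ref{xx3.4} applies directly to $V$ and yields that $V$ is zad. As this argument is valid for every irreducible $V$, the corollary follows.

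There is really no obstacle here: the work has already been done in Theorem \ref{xx3.3} (which produces the dichotomy $\dim V > \dim \End_A(V)$ or $\dim \End_A(V) = 1$) and in the reduction performed in Corollary \ref{xx3.4} (where absolute irreducibility is translated into the condition $\End_A(V) \cong F$, i.e.\ $\dim \End_A(V) = 1$, which is the second alternative of Theorem \ref{xx3.3}). Thus the only content of the present corollary is the trivial quantifier manipulation ``splitting $\Leftrightarrow$ every irreducible is absolutely irreducible,'' followed by an application of the previous corollary.

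If one wanted a slightly more self-contained write-up, one could bypass Corollary \ref{xx3.4} and appeal directly to Theorem \ref{xx3.3}: for an irreducible $A$-module $V$ over a splitting $F$-algebra, $\End_A(V) \cong F$, so $\dim \End_A(V) = 1$, which is precisely the second sufficient condition of Theorem \ref{xx3.3}, and hence $V$ is zad. Either phrasing is a one-line proof.
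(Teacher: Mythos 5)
Your proof is correct and matches the paper's (implicit) argument exactly: the paper offers no separate proof because, as you observe, the statement follows immediately from the definition of a splitting algebra together with Corollary \ref{xx3.4} (equivalently, the condition $\dim \End_A(V)=1$ in Theorem \ref{xx3.3}). The only point left tacit in both your write-up and the paper is that every irreducible module over a finite dimensional algebra is automatically finite dimensional, so Theorem \ref{xx3.3} indeed applies.
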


There is a large class of finite dimensional algebras, called cellular algebras
\cite{GL}, for which the ground fields are always splitting. Many important algebras from representation
theory are in fact cellular, such as, Hecke algebras of finite type, $q$-Schur algebras, Ariki-Koike algebras,
Brauer algebras, Birman-Wenzl algebras, etc. see \cite{AST} and the references therein for more examples.
We believe that, in many cases, these algebras are zero product determined (particularly, quasi-hereditary
cellular algebras are zero product determined, see Corollary \ref{xx3.10} below). Then there should
exist presentations of these algebras in which all generators are idempotents by \cite[Theorem 3.7]{Bresar},
which in turn can exhibit more information of cellular algebras.

\medskip
We then turn to classify the principal (indecomposable) projective zad modules for a finite dimensional algebra.
The main result, Theorem \ref{thm-zad-proj}, implies that a finite dimensional splitting algebra with
finite global dimension is zero product determined. Note that all quasi-hereditary cellular algebras
have finite global dimensions, and are therefore zero product determined.
This provides a lot of interesting examples of zero product determined algebras.

\begin{theorem}\label{thm-zad-proj}
Suppose that $A$ is a finite dimensional algebra. Let $E$ be the subalgebra of $A$ generated by all idempotents,
and let $I$ be the ideal of $A$ generated by all commutators of idempotents with arbitrary elements in $A$.
The Jacobson radical of $A$ is denoted by $R$. Let $e$ be an idempotent in $A$. Then the following statements are equivalent:
\begin{enumerate}
\item[(1)] the principal projective $A$-module $Ae$ is zad;

\item[(2)] $Ae/Re$ is zad and $Re\subseteq Ie$;

\item[(3)] $Ae/Re$ is zad and every $1$-dimensional quotient module $S$ of $Ae$ satisfies $\Ext_A^1(S,S)=0$;

\item[(4)]$Ae=Ee$.
\end{enumerate}
\end{theorem}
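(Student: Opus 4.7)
The plan is to close the cycle $(4)\Rightarrow(1)\Rightarrow(3)\Rightarrow(2)\Rightarrow(4)$, with Lemma~\ref{xx2.4} as the main technical tool and a reduction to primitive $e$ (via Lemma~\ref{xx2.6}) streamlining the harder implications.

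For $(4)\Rightarrow(1)$, I would start from the identity
\[
e'\otimes v-1\otimes e'v \;=\; -(1-e')\otimes e'v+e'\otimes(1-e')v,
\]
valid for any idempotent $e'\in A$ and any $v\in Ae$, which lies in $F\text{-span}\,S_{A\otimes Ae}$ because $(1-e')(e'v)=0=e'(1-e')v$. Since $F\text{-span}\,S_{A\otimes Ae}$ is closed under left multiplication on the first tensor slot, a short induction on the number of idempotent factors in a product extends this to $b\otimes v-1\otimes bv\in F\text{-span}\,S_{A\otimes Ae}$ for every $b\in E$ and $v\in Ae$. Given $x\in A$ and $v\in Ae$, hypothesis $(4)$ allows me to write $v=be$ and $xv=b'e$ with $b,b'\in E$; then $(xb-b')\otimes e\in S_{A\otimes Ae}$ (since $(xb-b')e=0$), and combining with the $E$-identities applied to $b$ and $b'$ produces $x\otimes v-1\otimes xv\in F\text{-span}\,S_{A\otimes Ae}$. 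Lemma~\ref{xx2.4} then yields $(1)$.

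For $(1)\Rightarrow(3)$, the zad-ness of $Ae/Re$ is immediate from Lemma~\ref{xx2.5}. For the $\Ext$-vanishing on a $1$-dimensional quotient $S$, suppose for contradiction that $\Ext^1_A(S,S)\ne 0$. Since $S$ is a simple quotient of $Ae$, the projective cover $P(S)$ is a summand of $Ae$, and any nonsplit self-extension $M=Fv_1\oplus Fv_2$ of $S$ (whose top is $S$) is a quotient of $P(S)$, hence of $Ae$; by Lemma~\ref{xx2.5}, $M$ is then zad. Parametrising the action by the character $\chi$ of $S$ and a nonzero $\chi$-derivation $\alpha:A\to F$, say $av_1=\chi(a)v_1$ and $av_2=\alpha(a)v_1+\chi(a)v_2$, the bilinear form $f(a,\gamma v_1+\delta v_2):=\delta\alpha(a)$ can be checked to satisfy (\ref{eq1.1}). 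Any factorisation $f(a,v)=\Phi(av)$ would then force $\alpha=\Phi(v_2)\chi\in F\chi$, but a direct computation shows that the only scalar multiple of $\chi$ satisfying the $\chi$-derivation identity is $0$, contradicting $\alpha\ne 0$.

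For the remaining implications $(3)\Rightarrow(2)$ and $(2)\Rightarrow(4)$, the pivot is the quotient $Ae/Ie\cong(A/I)\bar e$. Every idempotent of $A/I$ is central by the very construction of $I$, so $A/I$ decomposes as a direct product of local algebras, and $Re\subseteq Ie$ is equivalent to the semisimplicity of $(A/I)\bar e$---that is, to each local factor of $A/I$ hit by $\bar e$ being a division algebra. For $(3)\Rightarrow(2)$, I would use the $\Ext$-vanishing on $1$-dimensional quotients, together with the zad-ness of the top (which through Theorem~\ref{xx3.3} constrains the non-absolutely-simple summands), to rule out any non-division local factor of $A/I$. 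For $(2)\Rightarrow(4)$, the semisimplicity of $(A/I)\bar e$ combined with the centrality and $E$-origin of its idempotents would let me lift a generating set of $(A/I)\bar e$ back to an $E$-expression $be$ for every element of $Ae$. I expect $(3)\Rightarrow(2)$ to be the principal obstacle: matching $\Ext^1_A(S,S)$ against the local structure of $A/I$ demands careful use of Wedderburn--Artin together with the finite-dimensionality of $A$.
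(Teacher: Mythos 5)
Your implications $(4)\Rightarrow(1)$ and $(1)\Rightarrow(3)$ are correct and essentially complete. The first reworks the paper's computation in the tensor-product language of Lemma~\ref{xx2.4}: your identity $e'\otimes v-1\otimes e'v=-(1-e')\otimes e'v+e'\otimes(1-e')v$ is exactly dual to the paper's manipulation $f(ab,ce)=f(ab,bce)=f(a,bce)$ for idempotent $b$, and the induction over products of idempotents goes through. The second replaces the paper's reduction to the local algebra $A/A(1-e)A$ and Lemma~\ref{lemma-local-zad-module} by the explicit bilinear form $f(a,\gamma v_1+\delta v_2)=\delta\alpha(a)$ on a nonsplit self-extension; this form does satisfy \eqref{eq1.1} and admits no factorization $\Phi(av)$ (one first gets $\Phi(v_1)=0$ from $f(a,v_1)=0$ at $a=1$, then $\alpha=\Phi(v_2)\chi$, then $\alpha=0$ from the $\chi$-derivation identity), so this is a valid and more self-contained alternative to the paper's route.

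The other two implications are only announced, and each is missing the one fact that makes it work. For $(2)\Rightarrow(4)$ you never invoke $I\subseteq E$ (the paper cites \cite[Lemma 3.1]{Bresar} for this); hypothesis $(2)$ speaks only of $I$ and conclusion $(4)$ only of $E$, and without the inclusion $Ie\subseteq Ee$ there is no bridge between them. Once it is in hand, the argument is that a nonzero zad quotient $Ae/Ie$ must be $1$-dimensional (the paper's Claim~1, via Lemma~\ref{lemma-local-zad-module}), and then $e\in Ee\setminus Ie$ forces $Ee=Ae$; your phrase about ``lifting a generating set'' does not substitute for either step. For $(3)\Rightarrow(2)$ the gap is the case where the simple top $S=Ae/Re$ has $\dim S>1$ (take $e$ primitive): then $Ae$ has no $1$-dimensional quotients at all, hypothesis $(3)$ degenerates to ``$S$ is zad'', and your plan of matching $\Ext^1_A(S,S)$ against the local factors of $A/I$ has nothing to match. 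The paper closes this case with Lemma~\ref{lemma-e1e2-img} (any homomorphism between orthogonal principal projectives has image in $Ie$) together with a projective-cover argument for $Re$. Your local-algebra pivot can in fact be completed, but it needs the observation --- absent from your sketch --- that $A(1-e)Ae\subseteq Ie$, so that a nonzero $Ae/Ie$ is a local algebra whose residue division algebra is isomorphic to $S$ as an $A$-module, whence $\dim S=\dim\End_A(S)$ and Theorem~\ref{xx3.3} forces $\dim S=1$, finally making the $\Ext$-hypothesis applicable. Either way, a concrete mechanism relating $Re$ to $Ie$ must be supplied; as it stands the proposal does not contain one.
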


What is surprising here is that, in the statement (3) of Theorem \ref{thm-zad-proj}, only irreducible modules are involved.
Before giving the proof, we need several lemmas. The first lemma is a generalization of \cite[Proposition 2.8]{Bresar}.

\begin{lemma}\label{lemma-local-zpd-algebra}
Let $B$ be a zero product determined $F$-algebra (maybe infinite dimensional). If $B$ is local,
then $B=F$.
\end{lemma}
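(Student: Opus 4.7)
The plan is to exploit the zpd hypothesis via a carefully chosen rank-one bilinear form whose vanishing on zero products is forced by locality. First I would observe that in a local algebra $B$ with Jacobson radical $R$, every element of $B\setminus R$ is a unit; consequently, if $ab=0$ with both $a$ and $b$ nonzero, then neither can be a unit, so both must lie in $R$.

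Next I would fix a linear functional $\lambda\colon B\to F$ with $\lambda(1)=1$ and $\lambda|_R=0$: extend $\{\bar 1\}$ to an $F$-basis of $B/R$, let $\mu$ be the coordinate functional at $\bar 1$, and set $\lambda=\mu\circ\pi$ where $\pi\colon B\to B/R$ is the projection. For an arbitrary linear functional $\phi\colon B\to F$, I would then consider the bilinear map
$$
f\colon B\times B\to F,\qquad f(a,b)=\lambda(a)\phi(b).
$$
By the first observation, $\lambda(a)=0$ whenever $ab=0$, so $f(a,b)=0$ in that case. The zpd hypothesis then produces $\Phi\colon B\to F$ with $\lambda(a)\phi(b)=\Phi(ab)$ for all $a,b\in B$.

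Setting $a=1$ gives $\Phi=\phi$, while setting $b=1$ gives $\Phi=\phi(1)\lambda$; comparing the two forces $\phi=\phi(1)\lambda$. Since $\phi$ was arbitrary, every linear functional on $B$ is a scalar multiple of $\lambda$. If $\dim_F B\geq 2$, I would pick $b\in B$ linearly independent from $1$ and construct $\phi$ with $\phi(1)=0$ and $\phi(b)=1$; then $\phi=\phi(1)\lambda=0$ contradicts $\phi(b)=1$. Hence $\dim_F B=1$, so $B=F$. The main subtlety I anticipate is spotting the right bilinear form; once one sees that a rank-one form built from a functional vanishing on $R$ automatically satisfies the zero-product condition, the remaining manipulations are essentially forced, and the only minor technicality is invoking a basis extension in $B/R$ to produce $\lambda$ in the possibly infinite-dimensional setting.
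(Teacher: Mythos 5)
Your proof is correct, but it takes a genuinely different route from the paper's. The paper argues in three steps, each outsourced to Bre\v{s}ar's results: the quotient $B/\mathfrak{m}$ is zero product determined (quotients of zpd algebras are zpd), a zpd division algebra must equal $F$, and a further cited proposition then forces $\mathfrak{m}=0$. You instead give a single self-contained argument: since in a local algebra any two-sided zero divisor is a non-unit and hence lies in the radical $R$, the rank-one form $f(a,b)=\lambda(a)\phi(b)$ (with $\lambda$ killing $R$ and $\lambda(1)=1$) automatically vanishes on zero products, and evaluating the resulting identity $\lambda(a)\phi(b)=\Phi(ab)$ at $a=1$ and at $b=1$ forces every functional $\phi$ to be a multiple of $\lambda$, whence $\dim_F B=1$. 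This is more elementary and, in effect, reproves both of the cited division-algebra and radical-vanishing facts for the local case in one stroke; the paper's version is shorter only because it leans on external machinery. One cosmetic slip: your claim ``$\lambda(a)=0$ whenever $ab=0$'' fails when $b=0$ and $a$ is a unit, but the needed conclusion $f(a,b)=0$ still holds there because $\phi(0)=0$, so the argument is unaffected. You should also note explicitly that $\bar 1\neq 0$ in $B/R$ (since $R$ is proper), so the basis extension defining $\lambda$ makes sense; with that, everything goes through in the infinite-dimensional setting.
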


\begin{proof}
Let $\mathfrak{m}$ be the unique maximal ideal of $B$, which is the Jacobson radical of $B$.
Then $B/\mathfrak{m}$ is a zero product determined $F$-algebra by \cite[Proposition 2.4]{Bresar}.
Note that $B/\mathfrak{m}$ is a division algebra and hence $B/\mathfrak{m}\cong F$ by
\cite[Proposition 2.8]{Bresar}. Now \cite[Proposition 2.10]{Bresar} shows $\mathfrak{m}=0$.
\end{proof}

Recall that a module is said to be cyclic provided it is generated by one element.

\begin{lemma}\label{lemma-local-zad-module}
Let $B$ be a local $F$-algebra (maybe infinite dimensional). Let $X$ be a nonzero finite dimensional cyclic $B$-module.
Then $X$ is zad as a $B$-module if and only if $X$ is $1$-dimensional.
\end{lemma}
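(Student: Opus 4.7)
The plan is to prove the nontrivial direction by reducing, via quotients and annihilators, to the case $B = F[t]/(t^2)$ with $X$ the regular module, and then exhibiting an explicit non-factoring bilinear form; the other direction is immediate. For $(\Leftarrow)$, if $\dim_F X = 1$ the action map embeds $B/\Ann_B(X)$ into $\End_F(X) = F$ and is surjective, so $B/\Ann_B(X) \cong F$. Combining Lemma \ref{lemma-zad-over-quotient} with Lemma \ref{xx3.1} and the obvious fact that $F$ is zero product determined yields that $X$ is zad.

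For $(\Rightarrow)$, suppose $X$ is zad. Picking any irreducible quotient $S$ of $X$, which is again zad by Lemma \ref{xx2.5}, one has $S \cong D := B/\mathfrak{m}$ (the unique simple $B$-module, since $B$ is local), and $\End_B(S) \cong D^{\mathrm{op}}$ has the same $F$-dimension as $S$. Theorem \ref{xx3.3} then forces $\dim_F \End_B(S) = 1$, i.e., $D = F$; in particular every composition factor of $X$ is $1$-dimensional. Assuming toward contradiction that $\dim X \geq 2$, the second-to-top term in a composition series of $X$ gives a $2$-dimensional cyclic quotient $Y$, still zad by Lemma \ref{xx2.5}.

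Replacing $B$ by $B' := B/\Ann_B(Y)$ via Lemma \ref{lemma-zad-over-quotient}, I may assume $Y$ is faithful. Writing $\mathfrak{m}'$ for the radical of $B'$, the two composition factors of $Y$ are each $F$, so $\mathfrak{m}'Y$ is the unique proper submodule of $Y$, is $1$-dimensional, and satisfies $(\mathfrak{m}')^2 Y = 0$; faithfulness forces $(\mathfrak{m}')^2 = 0$. Fixing a cyclic generator $y_0$ of $Y$ and an $m \in \mathfrak{m}'$ with $m y_0 \neq 0$, any $m' \in \mathfrak{m}'$ satisfies $m' y_0 \in F m y_0$ and $m'(my_0) = 0$; faithfulness yields $\mathfrak{m}' = Fm$, so $B' = F \oplus Fm \cong F[t]/(t^2)$ and $Y \cong B'$ as $B'$-modules.

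The final step is to define $f \colon B' \times Y \to F$ by $f(a_0 + a_1 t,\, b_0 + b_1 t) = a_0 b_1$. Using $(a_0 + a_1 t)(b_0 + b_1 t) = a_0 b_0 + (a_0 b_1 + a_1 b_0) t$, a short case analysis on $a_0, b_0$ shows that whenever this product vanishes one has $a_0 b_1 = 0$; yet $f(1, t) = 1 \neq 0 = f(t, 1)$, so (\ref{eq2.2}) fails, contradicting that $Y$ is zad. The main obstacle is the bookkeeping in the reduction: one must verify that after shrinking $X$ to a $2$-dimensional quotient and $B$ to its faithful quotient, one lands rigidly in $F[t]/(t^2)$; once there, the explicit counterexample is immediate.
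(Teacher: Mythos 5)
Your proof is correct, and it departs from the paper's argument in a way worth noting. You both start from the same two pillars: quotients of zad modules are zad (Lemma \ref{xx2.5}), and zad-ness descends to and lifts from $B/\Ann$ (Lemma \ref{lemma-zad-over-quotient}). But where the paper derives $B/\mathfrak{m}\cong F$ by observing that $X/\mathfrak{m}X\cong D$ is a zad $D$-module and invoking Lemma \ref{lemma-local-zpd-algebra}, you get it from Theorem \ref{xx3.3} via the dimension count $\dim S=\dim\End_B(S)$ --- equally valid. The larger divergence is in the contradiction step: the paper passes to $\bar{X}=X/\mathfrak{m}^2X$ (which may have dimension greater than $2$), shows $\Ann_B(\bar{X})=\Ann_B(\bar{x})$ so that $\bar{X}$ is the regular module of the local algebra $B/\Ann_B(\bar{X})$, and again invokes Lemma \ref{lemma-local-zpd-algebra} (hence, indirectly, Propositions 2.4, 2.8 and 2.10 of Bre\v{s}ar). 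You instead cut down to an exactly $2$-dimensional cyclic quotient via a composition series, pin the faithful quotient algebra down to $F[t]/(t^2)$ acting on itself, and exhibit the explicit form $f(a_0+a_1t,\,b_0+b_1t)=a_0b_1$, which satisfies (\ref{eq1.1}) but violates (\ref{eq2.2}) at $(t,1)$. Your route buys concreteness --- the reader sees the actual obstruction, and Lemma \ref{lemma-local-zpd-algebra} is not needed for this step at all (you reprove its relevant special case by hand); the paper's route buys brevity and reusability, since Lemma \ref{lemma-local-zpd-algebra} handles local quotients of arbitrary dimension in one stroke and is also used elsewhere in the proof of Theorem \ref{thm-zad-proj}. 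All the small verifications you need (the $2$-dimensional quotient is cyclic and not semisimple by Nakayama, $(\mathfrak{m}')^2=0$ by faithfulness, $\mathfrak{m}'=Fm$ by evaluating on the two basis vectors $y_0$ and $my_0$) do go through.
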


\begin{proof}
We only need to prove the necessity by Theorem \ref{xx3.3}.
Let $\mathfrak{m}$ be the unique maximal ideal of $B$.
We denote $D:=B/\mathfrak{m}$. Then $D$ is a division algebra. We first prove that $D\cong F$.
Actually, by Nakayama's Lemma, the quotient $X/\mathfrak{m}X$ is a nonzero cyclic $D$-module.
This implies that $X/\mathfrak{m}X\cong D$ as $D$-modules. By Lemmas \ref{xx2.5} and \ref{lemma-zad-over-quotient},
the quotient $X/\mathfrak{m}X$ is a zad $D$-module. This implies that $D$ is a zero product determined algebra.
It follows from Lemma \ref{lemma-local-zpd-algebra} (or \cite[Proposition 2.8]{Bresar}) that $D\cong F$.

To show $X$ is $1$-dimensional, it suffices to show that $\mathfrak{m}X=0$.
Suppose contrarily that $\mathfrak{m}X\neq 0$. Then, by Nakayama's Lemma, $\mathfrak{m}^2X\neq \mathfrak{m}X$,
so $X/\mathfrak{m}^2X$ is not isomorphic to $X/\mathfrak{m}X$ and has dimension $>1$.
Since $X$ is a cyclic $B$-module, $X/\mathfrak{m}^2X$ is also a cyclic $B$-module.
For simplicity, we denote $X/\mathfrak{m}^2X$ by $\bar{X}$. Then $\mathfrak{m}^2\bar{X}=0$.
Let $\bar{x}$ be a generator of $\bar{X}$. Then
there is a surjective $B$-module homomorphism $\pi: B\rightarrow \bar{X}$ sending $b$ to $b\bar{x}$ with kernel $\Ann_B(\bar{x})$.
We claim that $\Ann_B(\bar{X})= \Ann_B(\bar{x})$. The inclusions  $\Ann_B(\bar{X})\subseteq \Ann_B(\bar{x})\subseteq \mathfrak{m}$
are obvious. Conversely, let $m_1\in\Ann_B(\bar{x})$. Since $B/\mathfrak{m}\cong F$, every element in $B$ is of the form
$\lambda+m$ with $\lambda\in F$ and $m\in\mathfrak{m}$. Thus $m_1(\lambda+m)\bar{x}=m_1\lambda\bar{x}+m_1m\bar{x}=\lambda m_1\bar{x}=0$.
Hence $\Ann_B(\bar{x})\subseteq\Ann_B(\bar{X})$ and consequently $\Ann_B(\bar{x})=\Ann_B(\bar{X})$.
It follows that
$$\bar{X}\cong B/\Ker\pi=B/\Ann_B(\bar{x})=B/\Ann_B(\bar{X}).$$
This means that $\bar{X}$ is isomorphic to the regular module of $B/\Ann_B(\bar{X})$.
By Lemmas \ref{xx2.5}, \ref{lemma-zad-over-quotient} and \ref{xx3.1}, we deduce that $B/\Ann_B({\bar{X}})$
is a zero product determined algebra, which is local. Then Lemma \ref{lemma-local-zpd-algebra}
forces that $B/\Ann_B(\bar{X})\cong F$ and $\bar{X}\cong F$, which is a contradiction since $\dim \bar{X}>1$.
\end{proof}

\begin{lemma}\label{lemma-e1e2-img}
Let $B$ be a algebra and $e_1, e_2$ be idempotents in $B$ such that $e_1e_2=0=e_2e_1$.
Then every $B$-module homomorphism $g: Be_1\rightarrow Be_2$ has $\Img g\subseteq Ie_2$,
where $I$ is the ideal generated by all commutators of idempotents in $B$ with arbitrary elements in $B$.
\end{lemma}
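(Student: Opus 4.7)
The plan is to reduce the problem to a single commutator identity. Since $g$ is a left $B$-module homomorphism, it is determined by its value on the generator $e_1$: set $y := g(e_1) \in Be_2$. The relation $e_1^2 = e_1$ forces $y = g(e_1) = g(e_1 e_1) = e_1 g(e_1) = e_1 y$, and $y \in Be_2$ gives $y = ye_2$. Thus $y = e_1 y e_2$.

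The decisive step is to observe that the hypothesis $e_2 e_1 = 0$ lets us express $y$ itself as a commutator of $e_1$ with an element of $B$. Indeed,
\[
[e_1, ye_2] = e_1(ye_2) - (ye_2)e_1 = e_1 y e_2 - y(e_2 e_1) = y - 0 = y.
\]
Since $e_1$ is an idempotent and $ye_2 \in B$, this exhibits $y$ as a generator of the ideal $I$, so $y \in I$. Combining with $y = ye_2$ yields $y \in Ie_2$.

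Finally, for any element $be_1 \in Be_1$,
\[
g(be_1) = b\, g(e_1) = by = b(ye_2) \in BI e_2 \subseteq Ie_2,
\]
because $I$ is a two-sided ideal. Hence $\Img g \subseteq Ie_2$, as required.

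There is no real obstacle here; the only thing to spot is the identity $[e_1, ye_2] = y$, which is forced by the two orthogonality conditions $e_1 e_2 = 0 = e_2 e_1$ together with $y \in e_1 B e_2$. Once that is seen, the rest is a one-line bookkeeping check using the ideal property of $I$.
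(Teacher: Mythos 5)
Your proof is correct and follows essentially the same route as the paper: both identify $g(e_1)$ as an element of $e_1Be_2$ and use the orthogonality $e_2e_1=0$ to realize it via the commutator $[e_1,\, g(e_1)e_2]$, then push forward by left multiplication using that $I$ is an ideal. The only cosmetic difference is that the paper writes $e_1xe_2=[e_1,xe_2]e_2\in Ie_2$ directly, while you first note $y=[e_1,ye_2]\in I$ and then use $y=ye_2$.
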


\begin{proof}
It is easy to say that $g(e_1)=e_1xe_2$ for some $x\in B$. Since $e_1xe_2=[e_1, xe_2]e_2\in Ie_2$,
the image $g(be_1)=bg(e_1)$ of any element $be_1$ belongs to $Ie_2$.
\end{proof}

Now we are ready to give a proof of Theorem \ref{thm-zad-proj}.

\begin{proof}[Proof of Theorem \ref{thm-zad-proj} ]
Without loss of generality, we can assume that $e$ is primitive. Actually,
$e$ can be written as a sum $e=e_1+\cdots+e_m$ of pairwise orthogonal primitive idempotents,
and it is easy to see that the statements hold for $e$ if and only if they hold for all $e_i$.

From now on, we assume that $e$ is primitive. In this case, the algebra $eAe$ is a local algebra and so is
the quotient algebra $A/A(1-e)A$. Actually, $A/A(1-e)A$ is isomorphic to a quotient algebra of $eAe$.

\medskip
{\it Claim 1}. If $Ae/Ie$ is a nonzero zad $A$-module, then $Ae/Ie\cong F$.

\smallskip
{\it Proof of Claim 1}. Note that each element $(1-e)ae$ in $(1-e)Ae$ is equal to the commutator $[1-e, ae]$.
This implies that $A(1-e)Ae\subseteq Ie$. Hence $A(1-e)A\subseteq \Ann_A(Ae/Ie)$.
By Lemma \ref{lemma-zad-over-quotient}, $Ae/Ie$ is a zad cyclic module over the local algebra $A/A(1-e)A$.
Thus $Ae/Ie$ has dimension $1$ by Lemma \ref{lemma-local-zad-module}.

\medskip
$(1)\Rightarrow (2)$. That $Ae/Re$ and $Ae/Ie$ are zad follows from Lemma \ref{xx2.5}.
It follows from Claim 1 that $Ae/Ie$ is irreducible as an $A$-module,
and hence $Re\subseteq Ie$ since $Re=\rad(Ae)$, the radical of $Ae$.

\medskip
$(2)\Rightarrow (4)$. It is known that $I\subseteq E$ (see \cite[Lemma 3.1]{Bresar}) and consequently
$Ie\subseteq Ee \subseteq Ae$. If $Ie=Ae$, then $Ee=Ae$. Suppose that $Ie\neq Ae$. Then $Ae/Ie\neq 0$.
Since $Ae/Re$ is zad and $Re\subseteq Ie$, it follows that $Ae/Ie$ is a quotient module of $Ae/Re$,
and is again zad as an $A$-module. By Claim 1, $Ae/Ie$ is $1$-dimensional.
Now $e\not\in Ie$ since $Ie\neq Ae$. However $e\in Ee$. This means that $Ee/Ie$
is a nonzero subspace of the $1$-dimensional space $Ae/Ie$  and has to be the whole $Ae/Ie$. Hence $Ee=Ae$.

\medskip

$(4)\Rightarrow (1)$. Let $f: A\times Ae\rightarrow F$ be a bilinear map satisfying (\ref{eq1.1}),
we need to prove that $f(ab, ce)=f(a, bce)$ for all $a, b, c\in A$, i.e., $f$ satisfies (\ref{eq2.1}).
Firstly, this is true when $b$ is an idempotent. Actually, in this case, $b(1-b)=0$, and $f(ab, ce)=f(ab,bce)+f(ab,(1-b)ce)=f(ab,bce)$.
Similarly, $f(a,bce)=f(ab,bce)+f(a(1-b),bce)=f(ab,bce)$. Hence $f(ab,ce)=f(a,bce)$ for all $a,c\in A$.
Since the elements in $E$ are linear combinations of products of idempotents, we have $f(ax, ce)=f(a,xce)$ for all $a,c\in A$ and $x\in E$.
Now by the hypothesis (4), for all $a,b,c\in A$, there are equalities
$$f(ab,ce)=f(ab,cee)=f(abce,e)=f(a,bcee)=f(a,bce)$$
The second and third equalities hold because $ce$ and $bce$ belong to $Ae=Ee\subseteq E$.

\medskip
$(1)\Rightarrow (3)$. That $Ae/Re$ is zad is clear. Note that $S:=Ae/Re$ is the unique irreducible quotient module of $Ae$.
If $S$ is not $1$-dimensional or $\Ext_A^1(S,S)=0$, there is nothing to prove.
Assume that $S$ is $1$-dimensional and $\Ext_A^1(S,S)\neq 0$. This means there is a non-split short exact sequence
$$0\rightarrow S\rightarrow X\rightarrow S\rightarrow 0$$
of $A$-modules. In other words, $X$ is not isomorphic to the direct sum $S\oplus S$.
This implies that $X$ is a $2$-dimensional cyclic $A$-module. Note that $Ae$ is a projective $A$-module,
the natural surjective homomorphism $Ae\rightarrow S$ lifts to a surjective $A$-homomorphism from $Ae$ to $X$ in this case.
Hence $X$ is isomorphic to a quotient module of $Ae$ and is therefore zad as an $A$-module.
Since $S$ is $1$-dimensional, the multiplicity of $Ae$ as a direct summand of the regular $A$-module ${}_AA$ is $1$.
This means there is no direct summands of $A(1-e)$ having $S$ as a quotient. Hence $(1-e)S\cong \Hom_A(A(1-e), S)=0$.
It follows that $(1-e)X=0$. Thus $X$ is a zad module over the local algebra $A/A(1-e)A$ by Lemma \ref{lemma-zad-over-quotient}.
This leads to a contradiction by Lemma \ref{lemma-local-zad-module}.

\medskip
$(3)\Rightarrow (2)$. We only need to prove that $Re\subseteq Ie$. Two cases occur.

\smallskip
Case (i): $Ae$ is isomorphic to a direct summand of $A(1-e)$.
Then there are $A$-module homomorphisms $g: Ae\rightarrow A(1-e)$ and $h: A(1-e)\rightarrow Ae$ such that $hg=1_{Ae}$.
By Lemma \ref{lemma-e1e2-img}, we see that $Ae=\Img hg\subseteq \Img h\subseteq Ie$, which in turn implies that $Re\subseteq Ie$.

Case (ii): $Ae$ is not isomorphic to any direct summand of $A(1-e)$. In this case,
the multiplicity of $Ae$ as a direct summand of ${}_AA$ is $1$.
This means that the simple quotient $Ae/Re$ is $1$-dimensional over its endomorphism algebra.
In particular, $\dim Ae/Re=\dim \End_{A}(Ae/Re)$. Since $Ae/Re$ is zad,
by Theorem \ref{xx3.3}, we have $\End_A(Ae/Re)\cong F$. Thus $Ae/Re$ is also $1$-dimensional,
and is the unique $1$-dimensional quotient module of $Ae$. Denote $Ae/Re$ by $S$.
Applying $\Hom_A(-,S)$ to the exact sequence $0\rightarrow Re\rightarrow Ae\rightarrow S\rightarrow 0$
results in an exact sequence
$$0\rightarrow \Hom_A(S, S)\stackrel{\theta}{\rightarrow} \Hom_A(Ae, S)\rightarrow \Hom_A(Re, S)\rightarrow \Ext_A^1(S, S).$$
In the above exact sequence, the map $\theta$ is an isomorphism. Since $\Ext_A^1(S, S)=0$,
it follows that $\Hom_A(Re, S)=0$, and thus $Re/R^2e$ does not have $S$ as a direct summand.
Hence $Ae$ is not a direct summand of the projective cover of $Re$. This implies that there is a surjective $A$-module homomorphism
$$\bigoplus_{i=1}^n A(1-e)\stackrel{[g_1,\cdots,g_n]}{\longrightarrow} Re$$
for some $n$. Let $\iota: Re\rightarrow Ae$ be the inclusion map.
Then by Lemma \ref{lemma-e1e2-img}, we have $\Img(\iota g_i)\subseteq Ie$ for all $i=1,\cdots, n$.
It follows that $Re=\sum_{i=1}^n\Img (\iota g_i)\subseteq Ie$.
\end{proof}

\begin{remark}
The proof of Theorem \ref{thm-zad-proj} also works under the condition that $A$ is a semiperfect $F$-algebra
with $A/R$ finite dimensional. An algebra $A$ is semiperfect provided that $1$ can be written as
a sum of pairwise orthogonal idempotents $1=e_1+\cdots+e_n$ such that $e_iAe_i$ is a local algebra for all $i=1,\ldots,n$.
A semiperfect algebra can be infinite dimensional in general.
\end{remark}

Taking $e$ to be the identity $1$ in Theorem \ref{thm-zad-proj}, we obtain the following corollary.

\begin{corollary}\label{corollary-zpd-equiv-condition}
Suppose that $A$ is a finite dimensional algebra. Let $E$ be the subalgebra of $A$ generated by all idempotents,
and let $I$ be the ideal of $A$ generated by all commutators of idempotents with arbitrary elements in $A$.
The Jacobson radical of $A$ is denoted by $R$. Then the following statements are equivalent:
\begin{enumerate}
\item[(1)] $A$ is zero product determined;

\item[(2)] $A/R$ is zero product determined and $R\subseteq I$;

\item[(3)] $A/R$ is zero product determined and every $1$-dimensional $A$-module $S$ has $\Ext_A^1(S, S)=0$;

\item[(4)]$A=E$.
\end{enumerate}
\end{corollary}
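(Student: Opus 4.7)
The plan is to derive the corollary by specializing Theorem \ref{thm-zad-proj} to the idempotent $e=1$ and then translating the resulting module-theoretic statements into algebra-theoretic ones via Lemma \ref{xx3.1} and Lemma \ref{lemma-zad-over-quotient}. With $e=1$ one has $Ae={}_AA$ (the regular module), $Re=R$, $Ie=I$, and $Ee=E$, so conditions (1)--(4) of the theorem read, respectively: ``${}_AA$ is zad''; ``$A/R$ is a zad $A$-module and $R\subseteq I$''; ``$A/R$ is zad and every $1$-dimensional quotient $S$ of ${}_AA$ satisfies $\Ext_A^1(S,S)=0$''; and ``$A=E$''. These are the four statements of the corollary modulo two cosmetic translations.

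For the first translation, I would invoke Lemma \ref{xx3.1}: the regular $A$-module ${}_AA$ is zad if and only if $A$ is zero product determined. This converts condition (1) of the theorem into condition (1) of the corollary. For the second translation, since $R$ is an ideal of $A$ one has $R\subseteq \Ann_A(A/R)$, so Lemma \ref{lemma-zad-over-quotient} applied with $J=R$ yields that $A/R$ is zad as an $A$-module if and only if $A/R$ is zad as an $A/R$-module; applying Lemma \ref{xx3.1} to the algebra $A/R$ then says this is equivalent to $A/R$ being zero product determined. This converts the ``$A/R$ is zad'' clause appearing in both (2) and (3) of the theorem into ``$A/R$ is zero product determined'', which is exactly the wording used in the corollary.

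The last thing to check is that the $\Ext$-condition in (3) of the theorem, stated for $1$-dimensional quotients of $Ae={}_AA$, is the same as the condition in (3) of the corollary, stated for all $1$-dimensional $A$-modules. This is immediate: any $1$-dimensional $A$-module $S$ is cyclic (generated by any nonzero vector), hence is a quotient of ${}_AA$; conversely any quotient of ${}_AA$ is an $A$-module. Condition (4) of the theorem is already literally ``$A=E$'' after substituting $e=1$.

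I do not anticipate any real obstacle: the entire content of the corollary is contained in Theorem \ref{thm-zad-proj}, and the only care required is to correctly match the module-level notion ``zad regular module (of a quotient algebra)'' with the algebra-level notion ``zero product determined algebra'', which is precisely what Lemmas \ref{xx3.1} and \ref{lemma-zad-over-quotient} allow one to do. The proof is therefore a short paragraph consisting of the specialization $e=1$ followed by these two invocations.
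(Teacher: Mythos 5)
Your proposal is correct and coincides with the paper's own (very brief) argument: the paper simply states that the corollary follows by taking $e=1$ in Theorem \ref{thm-zad-proj}, and the translations you spell out -- Lemma \ref{xx3.1} to identify ``the regular module is zad'' with ``$A$ is zero product determined'', and Lemma \ref{lemma-zad-over-quotient} to pass between $A/R$ as a zad $A$-module and as a zero product determined algebra -- are exactly the intended (implicit) bookkeeping. Nothing is missing.
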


{\it Remark.} The equivalence between $(1)$ and $(4)$ in Corollary \ref{corollary-zpd-equiv-condition} was proved in \cite{Bresar}.
The equivalent statement $(2)$ is contained implicatively in the proof of \cite[Theorem 3.7]{Bresar}.  The condition (3) is new and turns out to be a convenient characterization of zero product determined algebras, reducing the problem to irreducible modules.

\medskip
Recall that the global dimension of an algebra $A$ is the supremum of projective dimensions of $A$-module.

\begin{corollary}\label{xx3.10}
Let $A$ be a finite dimensional splitting $F$-algebra.
If $A$ has finite global dimension, then $A$ is zero product determined.
Particularly, all quasi-hereditary cellular algebras over a field are zero product determined.
\end{corollary}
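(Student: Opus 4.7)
The plan is to verify criterion~(3) of Corollary~\ref{corollary-zpd-equiv-condition}: that $A/R$ is zero product determined and that $\Ext_A^1(S,S)=0$ for every $1$-dimensional $A$-module $S$.

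First, since $A$ is splitting, the semisimple quotient $A/R$ is also splitting, so every irreducible $A/R$-module is absolutely irreducible and therefore zad by Corollary~\ref{xx3.5}. The regular $A/R$-module is a finite direct sum of such irreducibles, so it is zad by Lemma~\ref{xx2.6}, and Lemma~\ref{xx3.1} then yields that $A/R$ is zero product determined.

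Next, observe that any $1$-dimensional $A$-module is automatically simple. Because $A$ has finite global dimension, we may invoke the No Loops Theorem (proved for algebras over algebraically closed fields by Igusa and extended by Igusa--Liu--Paquette to arbitrary ground fields): if $A$ is a finite dimensional algebra of finite global dimension, then $\Ext_A^1(S,S)=0$ for every simple $A$-module $S$. Applying this to the $1$-dimensional simples gives the second half of condition~(3), so Corollary~\ref{corollary-zpd-equiv-condition} forces $A$ to be zero product determined.

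For the ``particularly'' assertion, it suffices to observe that every cellular algebra over a field is splitting (as recorded in the discussion preceding Theorem~\ref{thm-zad-proj}) and that every quasi-hereditary algebra has finite global dimension (a classical result of Cline--Parshall--Scott). A quasi-hereditary cellular algebra therefore satisfies both hypotheses of the first assertion and is thus zero product determined. The main obstacle is the invocation of the No Loops Theorem in the general splitting setting; one must be careful to appeal to a form of the result valid over an arbitrary ground field rather than merely an algebraically closed one. As an alternative for the quasi-hereditary case, one could circumvent this by exploiting standard filtrations of projective modules to argue directly that $\Ext_A^1$ vanishes between isomorphic simple modules.
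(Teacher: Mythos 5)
Your proposal is correct and follows essentially the same route as the paper: both verify condition (3) of Corollary \ref{corollary-zpd-equiv-condition}, using the splitting hypothesis to see that $A/R$ is zero product determined and Igusa's no loops theorem (\cite[Corollary 5.6]{Igusa}, which the paper cites and which suffices for $1$-dimensional simples over an arbitrary field) to get $\Ext_A^1(S,S)=0$. The only cosmetic difference is that you derive the zero product determined property of $A/R$ via Corollary \ref{xx3.5}, Lemma \ref{xx2.6} and Lemma \ref{xx3.1}, whereas the paper quotes directly that a direct sum of full matrix algebras over $F$ is zero product determined.
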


\begin{proof}
Let $R$ be the Jacobson radical of $A$. Since $A$ is splitting, the quotient algebra $A/R$
is a direct sum of full matrix algebras over $F$, which are zero product determined.
Hence $A/R$ is zero product determined. If $A$ has finite global dimension,
then it follows from \cite[Corollary 5.6]{Igusa} that every $1$-dimensional $A$-module $S$ satisfies $\Ext_A^1(S, S)=0$.
By Corollary \ref{corollary-zpd-equiv-condition} (3), we deduce that $A$ is  zero product determined.
\end{proof}


\end{document}